\newtheorem*{Thm*}{Theorem}
\theoremstyle{definition}
\newtheorem{theorem}{Theorem}[section]
\newtheorem{proposition}[theorem]{Proposition}
\newcommand{\utimes}{\kern0.05em\buildrel{\times}\over{\rule{0em}{0.004em}}\kern-0.9em\cup \kern0.2em}
\newcommand{\sutimes}{\mathrel{\kern0em\buildrel{\mathsf{x}}\over{\rule{0em}{0.0em}} \kern-0.35em\cup\kern-0.0em}}
\title[Hausdorff Continuity]{On the Hausdorff Continuity of Free L\`{e}vy Processes and Free Convolution Semigroups.}
\author[John D. Williams]{John D. Williams$^{\dagger}$}
\thanks{\footnotesize $^{\dagger}$Research supported in part by the Alexander von Humboldt-Stiftung and the AMS-Simons Foundation.}
\date{\today}
\address{John D. Williams, Universit\"{a}t des Saarlandes,
Fachrichtung Mathematik.
Postfach 151150.
66041, Saarbr\"{u}cken, Germany.}
\email{williams@math.uni-sb.de}
\begin{document}
\maketitle

\begin{abstract}
Let $\mu$ denote a Borel probability measure and let $\{ \mu_{t} \}_{t\geq 1}$ denote the free additive convolution semigroup of Nica and Speicher.  We show that the support of these measures varies continuously in the Hausdorff metric for $t >1$.
We utilize complex analytic methods and, in particular, a characterization of the absolutely continuous portion of these supports due to Huang.
\end{abstract}

\section{Introduction}

Let  $\mu$ and $\nu$ denote Borel probability measures on $\mathbb{R}$.  We refer to $supp(\mu)$ as the support of this measure.
We denote by $\mu \boxplus \nu$ as their \textit{free additive convolution} \cite{V1} and refer to \cite{DNV} for an introduction to this theory.

The \textit{free convolution semigroup} associated to $\mu$ is the  family of Borel probability measures $\{ \mu_{t} \}_{t\geq 1}$
satisfying \begin{enumerate}
\item $\mu_{1} = \mu$
\item $\mu_{t} \boxplus \mu_{s} = \mu_{t+s}$
\end{enumerate}
The existence of this semigroup for each $\mu$ was proven in \cite{NS1}.  The case where the family extends to an $\mathbb{R}^{+}$ semigroup $\{ \mu_{t} \}_{t \geq 0}$ with $\mu_{0} = \delta_{0}$ are the \textit{free L\`{e}vy processes} \cite{Bi}.

Our main theorem is the following:
\begin{theorem}\label{MainT}
Let $\mu$ denote a compactly supported Borel probability measure on $\mathbb{R}$.
Then the family of sets $\{ supp(\mu_{t}) \}_{t > 1}$ is continuous in the Hausdorff metric.
\end{theorem}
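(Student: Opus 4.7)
My plan is to establish the two halves of Hausdorff convergence for the map $t \mapsto \supp(\mu_t)$ on $(1,\infty)$ separately. The easier half is \emph{lower semicontinuity}: for any $t_0 > 1$ and any sequence $t_n \to t_0$, every point of $\supp(\mu_{t_0})$ is a limit of points in $\supp(\mu_{t_n})$. This follows from the weak continuity $t \mapsto \mu_t$, which is standard from $R_{\mu_t}(z) = t R_{\mu}(z)$ and the continuity of the inverse Cauchy transform on the appropriate domain. Given $x \in \supp(\mu_{t_0})$ and an open neighborhood $V$ of $x$, $\mu_{t_0}(V) > 0$ forces $\mu_{t_n}(V) > 0$ for all large $n$ by the Portmanteau theorem, so $V$ meets each such $\supp(\mu_{t_n})$.

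The heart of the argument is \emph{upper semicontinuity}: if $x_n \in \supp(\mu_{t_n})$ and $x_n \to x$, then $x \in \supp(\mu_{t_0})$. Here I would rely on Huang's description of the absolutely continuous part of $\mu_t$ for $t > 1$: its support is the closure of an open set $U_t \subseteq \mf{R}$ defined by the positivity of the imaginary part of a subordination function $\omega_t : \mf{C}^+ \to \mf{C}^+$ associated to the decomposition $\mu_t = \mu \boxplus \mu_{t-1}$ and extended continuously to $\mf{R}$ by Huang's boundary regularity. Because $\omega_t$ is analytic in both $z$ and $t$ on $\mf{C}^+$ and the boundary extension is controlled uniformly on compact subintervals of $(1,\infty)$, one expects $(t,x) \mapsto \omega_t(x)$ to be jointly continuous on $(1,\infty) \times \mf{R}$; granting this, the set $\{(t,x) : \operatorname{Im} \omega_t(x) > 0\}$ is open, and its $t$-fiber closures vary upper semicontinuously in the Hausdorff sense.

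The atomic part of $\supp(\mu_t)$ is handled separately via the Belinschi--Bercovici description: for $t > 1$, atoms of $\mu_t$ are in explicit bijection with atoms of $\mu$ whose masses exceed a $t$-dependent threshold, and each such atom location is an analytic function of $t$ on the open set where the threshold condition is strict. There are only finitely many such atoms and each moves continuously, so no atom can ``appear from nowhere'' or ``vanish into nowhere'' at an interior $t_0 > 1$. Combining this atomic bookkeeping with the analytic control of $\overline{U_t}$ from the previous paragraph yields Hausdorff continuity of the full support at each $t_0 > 1$.

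The main obstacle I anticipate lies in the boundary case of the upper semicontinuity step: when $\operatorname{Im} \omega_{t_0}(x) = 0$ at a limit $x = \lim x_n$ of points $x_n \in U_{t_n}$, one must show $x \in \overline{U_{t_0}}$ rather than treating it as an isolated zero of $\operatorname{Im} \omega_{t_0}$ on $\mf{R}$. This requires a non-degeneracy statement at boundary points of $U_{t_0}$, to the effect that zeros of the boundary imaginary part cannot be isolated from $U_{t_0}$ itself. I would try to extract this from the local structure of $\omega_t$ in terms of the Voiculescu transform of $\mu$, coupled with an open mapping argument; alternatively, one may reduce the boundary case to the interior case by perturbing $t$ slightly and invoking the already-established weak convergence to locate nearby points of $U_{t_n}$.
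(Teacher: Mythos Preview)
Your outline has the right architecture—Huang's a.c.\ support description plus the Belinschi--Bercovici atomic formula—and the Portmanteau argument for lower semicontinuity is a clean shortcut the paper does not take. But the obstacle you flag in your last paragraph is not a side issue: it is essentially the entire proof, and neither of your proposed fixes (open mapping, perturbing $t$) is how it gets resolved. Two things are missing. First, the joint boundary continuity of $(t,x)\mapsto\omega_t(x)$ does not follow from analyticity on $\mf{C}^+$ plus vague ``uniform control''; the paper's longest proposition is devoted to the equivalent statement that the graphs $\{(x,f_t(x)):x\in V_t^+\}$ are Hausdorff continuous in $t$, and the work is concentrated exactly where $f_t$ touches zero. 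Second, the non-degeneracy you need—that a boundary zero of $\operatorname{Im}\omega_{t_0}$ cannot be isolated from $U_{t_0}$—comes in the paper from the strict convexity of $g(x)=\int(1+s^2)/(x-s)^2\,d\rho(s)$, whose superlevel sets are the $V_t^+$; convexity forbids finite local maxima and flat stretches, and that is the actual mechanism, not an open-mapping argument.

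Your claim that no atom can ``vanish into nowhere'' at an interior $t_0>1$ is also false as stated: whenever $\mu(\{a\})=(t_0-1)/t_0$ for some atom $a$ of $\mu$, the atom of $\mu_t$ at $ta$ disappears as $t\uparrow t_0$, and one must check that $t_0 a$ lands in the closure of the a.c.\ support of $\mu_{t_0}$. The paper handles this by a direct Cauchy-transform computation, writing $G_\mu(z)=\frac{t_0-1}{t_0 z}+\frac{1}{t_0}G_\nu(z)$ and showing $\lim_{y\downarrow 0}\int(1+s^2)/(s^2+y^2)\,d\rho(s)=1/(t_0-1)$, which places $a\in\overline{V_{t_0}^+}$. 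Your atomic bookkeeping does not cover this step.
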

We note that this theorem extends to  L\`{e}vy processes for $t >0$ through translation of time $t$.

\section{Preliminaries}
We assume that $\mu$ is not a Dirac mass.
The \textit{Cauchy transform} of $\mu$ is the function
$$G_{\mu}(z) := \int_{\mathbb{R}} \frac{1}{z-s} d\mu(s) : \mathbb{C}^{+} \mapsto  \mathbb{C}^{-}.$$
The reciprocal of this function is the \textit{F-transform}
$$ F_{\mu}(z) := \frac{1}{G_{\mu}(z)}:  \mathbb{C}^{+} \mapsto  \mathbb{C}^{+}.  $$
The main tool in our study is the boundary behavior of the $F$-transforms.

Part \eqref{BelT1} in the following theorem was first proven in \cite{BB1}.
\begin{theorem}[\cite{BelT}, Theorem 2.3]\label{BelT}
Let $t >1$.
\begin{enumerate}
\item\label{BelT1} A point $x \in \mathbb{R}$ satisfies $F_{\mu_{t}}(x) = 0$ if an only if $x/t$ is an atom of $\mu$ with mass
$$\mu(\{ x/t \}) \geq (t-1)/t.$$  If the inequality is strict, then $x$ is an atom of $\mu_{t}$, and 
$$\mu_{t}(\{ x \}) = t\mu \left( \left\{ \frac{x}{t} \right\}\right) - (t-1). $$
\item\label{BelT2} The nonatomic part of $\mu_{t}$ is absolutely continuous, and its density is continuous except at the (finitely many) points $x$ such that $F_{\mu_{t}}(x) = 0$.
\item\label{BelT3} The density of $\mu_{t}$ is analytic at all points where it is different from zero.
\end{enumerate}
\end{theorem}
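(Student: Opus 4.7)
The plan is to deduce all three assertions from the subordination structure of the semigroup. The $R$-transform identity $R_{\mu_t}=tR_\mu$ translates (as established in \cite{BB1}) to the existence of a unique analytic function $\omega_t:\mathbb{C}^+\to\mathbb{C}^+$ satisfying
\[
F_{\mu_t}(z)=F_\mu(\omega_t(z)),\qquad \omega_t(z)=\frac{z}{t}+\frac{t-1}{t}\,F_{\mu_t}(z),
\]
equivalently $\omega_t$ is the right inverse of $H_t(w)=tw-(t-1)F_\mu(w)$ on the relevant subdomain of $\mathbb{C}^+$. Standard Nevanlinna boundary theory (Julia--Carath\'eodory estimates applied through the defining identity for $\omega_t$) extends $\omega_t$ continuously to $\mathbb{R}$ with values in $\overline{\mathbb{C}^+}\cup\{\infty\}$, and everything that follows is a case split on whether $\operatorname{Im}\omega_t(x)$ is positive or zero.

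On the open set $A=\{x\in\mathbb{R}:\operatorname{Im}\omega_t(x)>0\}$, $F_{\mu_t}(x)=F_\mu(\omega_t(x))\in\mathbb{C}^+$, so the Stieltjes--Perron density $\operatorname{Im}F_{\mu_t}(x)/(\pi|F_{\mu_t}(x)|^2)$ is strictly positive and continuous on $A$. For \eqref{BelT3}, continuity of $\omega_t$ forces a real neighborhood of each $x\in A$ into $\mathbb{C}^+$, so $F_\mu\circ\omega_t$ is a holomorphic extension of $F_{\mu_t}$ across that arc, and the density is real-analytic there. On the complement $A^c$, $\omega_t(x)\in\mathbb{R}\cup\{\infty\}$ and the formula $\omega_t=z/t+(t-1)F_{\mu_t}/t$ forces $F_{\mu_t}(x)$ to be real; Stieltjes inversion then gives zero density at $x$ whenever $F_{\mu_t}(x)\neq 0$.

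For \eqref{BelT1}, the remaining task is to identify the real zeros of $F_{\mu_t}$. Any such zero gives $\omega_t(x)=x/t$ and $F_\mu(x/t)=0$. A Julia--Carath\'eodory computation at a boundary zero of $F_\mu$ identifies the finite angular derivative with $1/\mu(\{x/t\})$, so $x/t$ must be an atom of $\mu$. Conversely at any atom $y$ of mass $m$ the angular derivative of $H_t$ at $y$ is $t-(t-1)/m$, which must be non-negative for $y$ to lie in the boundary range of $\omega_t$, giving $m\geq(t-1)/t$. When the inequality is strict, differentiating $F_{\mu_t}=F_\mu\circ\omega_t$ together with $\omega_t=z/t+(t-1)F_{\mu_t}/t$ at $x=ty$ solves to $F_{\mu_t}'(x)=1/(tm-(t-1))$, so the mass of the atom of $\mu_t$ at $x$ is the reciprocal, $tm-(t-1)$. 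Since a probability measure admits only finitely many atoms of mass $\geq(t-1)/t$, this yields only finitely many zeros, completing \eqref{BelT2}.

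The main technical obstacle is the boundary regularity of $\omega_t$ and the Julia--Carath\'eodory book-keeping of angular derivatives, with particular care at the critical case $m=(t-1)/t$ (where the Julia derivative of $F_{\mu_t}$ at $x=ty$ diverges and no atom of $\mu_t$ is produced despite $F_{\mu_t}(x)=0$) and at boundary points where $\omega_t(x)=\infty$. Once these ingredients are in place, the three claims assemble mechanically from the case split above.
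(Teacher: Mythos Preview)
The paper does not prove this theorem; it is quoted verbatim from \cite{BelT} (with part~\eqref{BelT1} credited earlier to \cite{BB1}) and used as a tool. There is therefore no proof in the paper to compare against.

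That said, your sketch is essentially the argument of those references: subordination $F_{\mu_t}=F_\mu\circ\omega_t$ with $\omega_t=z/t+(t-1)F_{\mu_t}/t$, continuous boundary extension of $\omega_t$, the case split on $\operatorname{Im}\omega_t(x)$, and Julia--Carath\'eodory bookkeeping at boundary zeros. Two points you should tighten. First, in \eqref{BelT3} the analytic extension of $\omega_t$ across $A=\{\operatorname{Im}\omega_t>0\}$ does not follow from mere continuity of the boundary values; the clean argument is that $H_t\circ\omega_t=\mathrm{id}$ on $\mathbb{C}^+$ gives $H_t'(\omega_t(z))\omega_t'(z)=1$, whence $H_t'(\omega_t(x))\neq 0$ for $x\in A$ by continuity, and then the inverse function theorem for $H_t$ furnishes a holomorphic local inverse agreeing with $\omega_t$ on a two-sided neighborhood of $x$. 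Second, for \eqref{BelT2} you should make explicit why there is no singular continuous part: the singular part of $\mu_t$ is concentrated on $\{x:\lim_{y\downarrow 0}|G_{\mu_t}(x+iy)|=\infty\}=\{x:F_{\mu_t}(x)=0\}$, and you have already shown this set is finite, so the singular part is purely atomic.
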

We also note that the $F$-transform has continuous extension to $\overline{\mathbb{C}^{+}}$ for $t > 1$.

Consider the Nevanlinna representation
\begin{equation}\label{Nevanlinna}
 F_{\mu}(z) = \alpha + z + \int_{\mathbb{R}} \frac{1 + sz}{s - z}d\rho(s).
\end{equation}
The following methods for studying $\{ \mu_{t} \}_{t\geq 1}$ were developed in \cite{haowei}.  Let $$ g(x) := \int_{\mathbb{R}} \frac{s^{2} + 1}{(x-s)^{2}}d\rho(s) : \mathbb{R} \mapsto \mathbb{R}^{+} \cup \{ \infty \}. $$
For $t > 1$, we define
$$V_{t}^{+} := \left\{x \in \mathbb{R} : g(x) > \frac{1}{t-1} \right\} $$
$$  f_{t}(x) := \inf \left\{ y: \int_{\mathbb{R}} \frac{s^{2} + 1}{(x-s)^{2} + y^{2}}d\rho(s) \leq \frac{1}{t-1}    \right\}$$
and note that $f_{t}(x) > 0$ if and only if $x \in V_{t}^{+}$.
Lastly, we define
$$ H_{t}(z) :=  tz - (t-1)F_{\mu}(z), \  \  \ \psi_{t}(x) := H_{t}(x + if_{t}(x)).$$
In the remarks following Corollary 3.6 in \cite{haowei}, we have that
\begin{equation}\label{lipschitz}
|H_{t}(z_{1}) - H_{t}(z_{2})| \leq 2 |z_{1} - z_{2}|
\end{equation}
provided that $\Im{(z_{i})} \geq f_{t}(\Re{(z_{i})})$ (the original proof of this fact may be found in \cite{BBSG}).

The following theorem will serve as our main tool for the study of the absolutely continuous portion of the supports.  It was proven by Haowei Huang but many of the ideas were first formulated in Philippe Biane's paper \cite{phil}.
\begin{theorem}[\cite{haowei}, Theorem 3.8]\label{ACT}
Let $t > 1$.
 Then the  absolutely continuous part of $\mu_{t}$ is concentrated on the closure of  $\psi_{t}(V_{t}^{+})$.
\end{theorem}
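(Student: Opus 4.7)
The plan is to exploit the subordination function for the semigroup $\{\mu_t\}_{t\geq 1}$. Using $R_{\mu_t} = t R_\mu$, one obtains an analytic function $\omega_t : \mathbb{C}^+ \to \mathbb{C}^+$ satisfying
$$F_{\mu_t}(z) = F_\mu(\omega_t(z)) \quad \text{and} \quad H_t(\omega_t(z)) = z,$$
so that $\omega_t$ is a holomorphic right-inverse of $H_t$ (cf.\ Belinschi-Bercovici). The argument will proceed by (i) identifying the image of $\omega_t$ with the open region $\Omega_t := \{x+iy : y > f_t(x)\}$ and showing $\omega_t = (H_t|_{\Omega_t})^{-1}$; (ii) extending $\omega_t$ continuously to $\overline{\mathbb{C}^+}$ using \eqref{lipschitz} and the continuous boundary extension of $F_{\mu_t}$; and (iii) applying Stieltjes inversion together with the Nevanlinna representation to show the absolutely continuous density of $\mu_t$ vanishes off $\psi_t(V_t^+)$.

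For (i), the Nevanlinna representation \eqref{Nevanlinna} yields by direct computation
$$\Im H_t(x+iy) = y\left(1 - (t-1)\int_{\mathbb{R}} \frac{s^2+1}{(x-s)^2+y^2}\,d\rho(s)\right),$$
which is strictly positive precisely when $y > f_t(x)$. The Lipschitz estimate \eqref{lipschitz} implies $H_t$ is injective on $\overline{\Omega_t}$, and together with the open mapping theorem this identifies $\omega_t$ as the conformal inverse of $H_t|_{\Omega_t}$. Under this identification, the boundary $\partial \Omega_t = \{x + if_t(x) : x \in \mathbb{R}\}$ is mapped by $H_t$ into $\mathbb{R}$, and on the subset $V_t^+$ this mapping is precisely $\psi_t$. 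Since $H_t$ maps the interior $\Omega_t$ into $\mathbb{C}^+$, any real boundary value of $\omega_t$ must lie in $\partial\Omega_t$.

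For (iii), the Stieltjes inversion formula expresses the density of the absolutely continuous part of $\mu_t$ at $x \in \mathbb{R}$ as $-\frac{1}{\pi}\Im\bigl(1/F_\mu(\omega_t(x))\bigr)$, where $\omega_t(x)$ denotes the boundary value from (ii). Either $\omega_t(x) = y + if_t(y)$ for some $y \in V_t^+$ --- in which case $x = H_t(\omega_t(x)) = \psi_t(y) \in \psi_t(V_t^+)$ --- or else $\omega_t(x) = x' \in \mathbb{R}$ with $g(x') \leq 1/(t-1)$. In the latter case, the pointwise bound $\frac{(s^2+1)y}{(s-x')^2+y^2} \leq y\cdot \frac{s^2+1}{(s-x')^2}$ and the finiteness of $g(x')$ give
$$\int_{\mathbb{R}} \frac{(s^2+1)y}{(s-x')^2+y^2}\,d\rho(s) \leq y\cdot g(x') \to 0 \quad (y \to 0^+),$$
forcing $\Im F_\mu(x'+iy) \to 0$, so $F_\mu(\omega_t(x)) \in \mathbb{R}$; the degenerate possibility $F_\mu(\omega_t(x)) = 0$ corresponds by Theorem \ref{BelT} to an atom of $\mu_t$, which does not contribute to the absolutely continuous part. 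Hence the absolutely continuous density vanishes outside $\psi_t(V_t^+)$, proving the theorem. The chief technical burden lies in step (ii), establishing the continuous boundary extension of $\omega_t$, for which \eqref{lipschitz} is the critical input.
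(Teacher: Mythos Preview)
The paper does not prove this statement; Theorem~\ref{ACT} is quoted from Huang's paper \cite{haowei} (building on Biane \cite{phil}) as a black box and is used only as an input to the proof of Theorem~\ref{MainT}. There is therefore no ``paper's own proof'' to compare against.

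That said, your outline is essentially the argument carried out in \cite{haowei}: one identifies the subordination map $\omega_t$ as the conformal inverse of $H_t$ restricted to the region $\Omega_t=\{x+iy:y>f_t(x)\}$, extends it to the boundary, and reads off the density of $\mu_t$ via Stieltjes inversion through $F_{\mu_t}=F_\mu\circ\omega_t$. Your computation of $\Im H_t$ from the Nevanlinna representation is correct, and the dichotomy in step~(iii) is the right one.

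One genuine gap to flag: the Lipschitz estimate \eqref{lipschitz} is an \emph{upper} bound $|H_t(z_1)-H_t(z_2)|\le 2|z_1-z_2|$, which by itself does not yield injectivity of $H_t$ on $\overline{\Omega_t}$. Injectivity requires a separate argument (in \cite{haowei} and \cite{BBSG} it is obtained from the Denjoy--Wolff/Earle--Hamilton contraction property of the map whose fixed point defines $\omega_t$, or equivalently from showing that the subordination map is itself Lipschitz with constant at most~$1$ and inverts $H_t$). You should either cite this directly or supply the fixed-point argument; the estimate \eqref{lipschitz} is the right tool for the boundary extension in step~(ii), not for injectivity in step~(i).
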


\section{Main Results}
We assume that $\mu$ is compactly supported. 
Since all of our results are for $t > 1$, we may translate the semigroup.
Therefore, by \eqref{BelT}\eqref{BelT2}, we may assume that $\mu$ is absolutely continuous apart from a finite number of atoms.  We may also assume that $F_{\mu}$ extends continuously to $\overline{\mathbb{C}^{+}}$.
The proof of our main theorem is based on the following propositions.
\begin{proposition}\label{prop1}
Let $1 \leq \alpha < \beta$.
The set $V_{t}^{+}$ is
uniformly bounded and increasing for all $t \in (\alpha , \beta)$
 and is
Hausdorff continuous in the parameter $t$.
\end{proposition}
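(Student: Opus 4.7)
Uniform boundedness and monotonicity are straightforward. Compactness of $\mathrm{supp}(\mu)$ transfers to the Nevanlinna measure $\rho$ in \eqref{Nevanlinna}, say $\mathrm{supp}(\rho) \subseteq [-M,M]$, so that $g(x) = O(1/x^{2})$ as $|x| \to \infty$. For every $t \in (\alpha,\beta)$ we have $1/(t-1) > 1/(\beta-1)$, so $V_{t}^{+} \subseteq \{g > 1/(\beta-1)\}$, a bounded set that does not depend on $t$. Monotonicity is immediate because $t \mapsto 1/(t-1)$ is strictly decreasing on $(1,\infty)$, so the defining inequality $g(x) > 1/(t-1)$ only weakens as $t$ grows, giving $V_{s}^{+} \subseteq V_{t}^{+}$ for all $s < t$ in $(\alpha,\beta)$.

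The Hausdorff continuity I would derive from the strict convexity of $g$. Differentiating twice under the integral on $\mathbb{R} \setminus \mathrm{supp}(\rho)$ yields
\[ g''(x) = \int_{\mathbb{R}} \frac{6(s^{2}+1)}{(x-s)^{4}}\, d\rho(s) > 0, \]
so $g$ is strictly convex on every connected component of $\mathbb{R} \setminus \mathrm{supp}(\rho)$. On a bounded component $(c_{k},d_{k})$, $g$ blows up at both endpoints and attains a unique interior minimum $m_{k}$. Writing $c := 1/(t-1)$, the intersection $V_{t}^{+} \cap (c_{k},d_{k})$ takes one of three explicit forms: (i) two tails $(c_{k},\alpha_{k}(t)) \cup (\beta_{k}(t),d_{k})$ with $\alpha_{k}(t) < \beta_{k}(t)$ the two preimages of $c$ under $g$, when $c > m_{k}$; (ii) the component minus its unique minimiser when $c = m_{k}$; or (iii) the full component when $c < m_{k}$; the unbounded components are handled similarly with $g \to 0$ at infinity. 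Strict convexity forces $g'(\alpha_{k}(t)),\, g'(\beta_{k}(t)) \neq 0$, so the implicit function theorem gives smooth dependence of these endpoints on $t$, and the transition at $t = 1 + 1/m_{k}$ between cases (i) and (iii) is continuous in the Hausdorff metric because the gap $\beta_{k}(t) - \alpha_{k}(t)$ collapses to zero there.

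The main obstacle is to upgrade this pointwise-in-$k$ picture to a uniform Hausdorff statement, since $\mathbb{R} \setminus \mathrm{supp}(\rho)$ may have infinitely many components. I would handle this by showing that on a narrow component of width $w_{k}$, the flanking mass of $\rho$ forces $g$ to be at least of order $1/w_{k}$ on its interior, so the minima $m_{k}$ tend to $\infty$ as $w_{k} \to 0$. Consequently, for $t$ confined to the compact interval $[\alpha,\beta]$, only finitely many components can fall into case (i) or (ii); on every other component we have $V_{t}^{+} \cap (c_{k},d_{k}) = (c_{k},d_{k})$ uniformly in $t$, contributing $[c_{k},d_{k}]$ to $\overline{V_{t}^{+}}$ independently of $t$. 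The desired Hausdorff continuity then reduces to the continuous motion of the finitely many relevant endpoints established in the previous paragraph.
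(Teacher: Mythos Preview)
Your boundedness, monotonicity, and identification of strict convexity of $g$ are correct and match the paper. The gap is in the uniformity step: the claim that on a component of width $w_{k}$ the flanking mass of $\rho$ forces $g \gtrsim 1/w_{k}$ is false in general, because membership of $c_{k},d_{k}$ in $\mathrm{supp}(\rho)$ guarantees only \emph{some} nearby mass, not a definite amount. Take $\rho=\sum_{n\ge 2}4^{-n}\delta_{1/n}$ (compactly supported, and realised as the Nevanlinna measure of some compactly supported $\mu$): the gaps $\bigl(\tfrac{1}{n+1},\tfrac{1}{n}\bigr)$ have widths $w_{n}\sim n^{-2}\to 0$, yet at their midpoints the two adjacent atoms contribute only $\sim 4^{-n}n^{4}\to 0$ while the remaining series is uniformly bounded, so the minima $m_{n}$ stay bounded. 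Hence infinitely many components may remain in your case~(i) for every $t$ in a fixed interval, and the reduction to finitely many moving endpoints fails as stated. The companion assertion that $g$ blows up at both endpoints of every bounded component is likewise unjustified when $\rho$ has no atom there and is sufficiently thin; in that regime you have not addressed $V_{t}^{+}\cap\mathrm{supp}(\rho)$ at all.

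The paper sidesteps the decomposition entirely. Since each integrand $x\mapsto (1+s^{2})/(x-s)^{2}$ is strictly convex, $g$ is strictly convex on all of $\mathbb{R}$ as an extended-real-valued function; thus $g$ is nowhere constant on an interval and has no finite local maximum. Hausdorff continuity then follows by a short compactness-and-contradiction argument: if $t_{n}\to t$ with $V_{t_{n}}^{+}\not\subset B_{\varepsilon}(V_{t}^{+})$, a subsequential limit $y$ of witnesses satisfies $g(y)=1/(t-1)$ while $g\le 1/(t-1)$ on $B_{\varepsilon}(y)$, forcing a flat piece or a finite local maximum; the reverse inclusion is handled symmetrically. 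No information about the component structure of $\mathbb{R}\setminus\mathrm{supp}(\rho)$ or the growth of $g$ near $\mathrm{supp}(\rho)$ is needed.
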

\begin{proof}

For $r < t$, $$\frac{1}{t-1} < \frac{1}{r-1}$$
so that $x \in V_{r}^{+}$ implies that $x \in V_{t}^{+}$.
Uniform boundedness follows from  the fact that $g(x) \rightarrow 0$ as $|x| \uparrow \infty$.

Observe that $$ \frac{d^{2}}{dx^{2}}g(x) = \int_{\mathbb{R}} \frac{6(s^{2} + 1)}{(x-s)^{4} }d\rho (s)$$
so that $$\frac{d^{2}}{dx^{2}}g(x) \in (0, \infty]$$ for all $x \in \mathbb{R}$.
We may conclude that $g$ is never constant on an interval and has no local maxima outside of the atoms, where $g(x) = \infty$.

Assume that $t_{n} \rightarrow t$ and   for some $\epsilon > 0$, $V^{+}_{t_{n}} \not\subseteq B_{\epsilon}(V_{t}^{+})$, the open unit ball, for all $n$.
Let $N > 0$ satisfy $V_{t_{n}}^{+} \subset [-N,N]$ for all $n$.
By compactness of $$[-N,N] \setminus B_{\epsilon}(V_{t}^{+}),$$ we have that $y_{n}$ subconverges to some $y \notin B_{\epsilon}(V_{t}^{+})$
  and satisfies $$g(y) = \frac{1}{t-1}.$$
The fact that $y \notin B_{\epsilon}(V_{t}^{+})$ implies that
$$ g(x) \leq \frac{1}{t-1}$$
for all $x\in B_{\epsilon}(y)$.  In particular, $g$ is either flat on some interval
containing $y$ or it has a local maxima at this point, providing a contradiction.

Assume, instead, that $V^{+}_{t} \not\subseteq B_{\epsilon}(V_{t_{n}}^{+})$.
By compactness, we may assume that there exists a $y \in V_{t}^{+}$ such that
$ y \notin B_{\epsilon/2}(V_{t_{n}}^{+})$.  This implies that, for every $w \in (y-\epsilon/2 , y + \epsilon/2)$,
$$ \frac{1}{t-1} \leq  g(w) \leq \frac{1}{t_{n}-1}.  $$
Letting $n\uparrow \infty$, this produces an interval where $g$ is constant, providing a contradiction.

\end{proof}

\begin{proposition}\label{ACmain}
The graphs $\{ (x,f_{t}(x))  :  x\in V_{t}^{+} \}$ are continuous in the Hausdorff metric for $t \in (1,\infty)$.
\end{proposition}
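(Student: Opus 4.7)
The plan is to show that $f_t(x)$ is jointly continuous in $(x, t)$ and decays continuously to zero as $x$ approaches $\partial V_t^+$, and then to combine this with Proposition \ref{prop1} to deduce Hausdorff continuity of the graphs $\Gamma_t := \{(x, f_t(x)) : x \in V_t^+\}$. The central object is
$$\Phi(x, y) := \int_{\mathbb{R}} \frac{s^{2} + 1}{(x-s)^{2} + y^{2}}\, d\rho(s),$$
which is jointly continuous on $\mathbb{R} \times (0,\infty)$, strictly decreasing in $y$, with $\Phi(x, 0^+) = g(x)$ and $\Phi(x, \infty) = 0$. By definition, for $x \in V_t^+$, the value $f_t(x)$ is the unique $y > 0$ solving $\Phi(x, y) = 1/(t-1)$.

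For joint continuity at an interior point, suppose $t_n \to t_0$ and $x_n \to x_0 \in V_{t_0}^+$. Picking $0 < c_1 < f_{t_0}(x_0) < c_2$, one has $\Phi(x_0, c_1) > 1/(t_0-1) > \Phi(x_0, c_2)$; by joint continuity of $\Phi$ at $(x_0, c_i)$ and $1/(t_n-1) \to 1/(t_0-1)$, the same strict inequalities hold at $(x_n, c_i)$ for $n$ large, forcing $c_1 \leq f_{t_n}(x_n) \leq c_2$ by monotonicity. Sending $c_1, c_2 \to f_{t_0}(x_0)$ gives $f_{t_n}(x_n) \to f_{t_0}(x_0)$. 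For boundary decay, suppose instead $x_0 \in \partial V_{t_0}^+$; then $g(x_0) = 1/(t_0-1) < \infty$ forces $\rho$ not to concentrate at $x_0$, so $\Phi(x_0, c) < g(x_0) = 1/(t_0-1)$ strictly for every $c > 0$. The same continuity argument then gives $\Phi(x_n, c) < 1/(t_n-1)$ for $n$ large, hence $f_{t_n}(x_n) \leq c$ by monotonicity, and finally $f_{t_n}(x_n) \to 0$.

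To assemble the Hausdorff continuity, observe that $V_t^+$ is uniformly bounded on compact $t$-intervals by Proposition \ref{prop1}, while $\Phi(x, y) \leq y^{-2}\int(s^2+1)\, d\rho$ shows $f_t(x) \leq \sqrt{(t-1)\int(s^{2}+1)\,d\rho}$, placing all $\Gamma_t$ in a fixed compact set. For $\Gamma_{t_0} \subseteq B_\epsilon(\Gamma_{t_n})$: given $(x_0, f_{t_0}(x_0)) \in \Gamma_{t_0}$, Proposition \ref{prop1} supplies $x_n \in V_{t_n}^+$ with $x_n \to x_0$, and interior continuity gives $(x_n, f_{t_n}(x_n)) \to (x_0, f_{t_0}(x_0))$. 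For the reverse inclusion, extract any subsequential limit $(x^*, y^*)$ of $(x_n, f_{t_n}(x_n)) \in \Gamma_{t_n}$; Proposition \ref{prop1} places $x^* \in \overline{V_{t_0}^+}$. If $y^* > 0$, passing to the limit in $\Phi(x_n, f_{t_n}(x_n)) = 1/(t_n-1)$ identifies $x^* \in V_{t_0}^+$ with $y^* = f_{t_0}(x^*)$, so $(x^*, y^*) \in \Gamma_{t_0}$; if $y^* = 0$, interior continuity excludes $x^* \in V_{t_0}^+$, so $x^* \in \partial V_{t_0}^+$, and picking $x_n' \in V_{t_0}^+$ with $x_n' \to x^*$, boundary decay (applied with the constant sequence $t_0$) gives $f_{t_0}(x_n') \to 0$, so $(x^*, 0) \in \overline{\Gamma_{t_0}}$.

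The main obstacle is the behavior at $\partial V_t^+$: one must rule out phantom limit points of $\Gamma_{t_n}$ that would sit above $\partial V_{t_0}^+$ at positive height. This is where the strict monotonicity of $\Phi(x, \cdot)$ is essential, and it holds at boundary points precisely because $g(x) = 1/(t-1) < \infty$ there forces $\rho$ to put positive mass off $\{x\}$.
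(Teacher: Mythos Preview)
Your argument is correct and takes a genuinely different, more conceptual route than the paper. The paper splits the analysis into the region $V_{t,c}^{+}=\{x:f_t(x)\geq c\}$ and its complement: on $V_{t,c}^{+}$ it obtains explicit multiplicative bounds $f_t(x)<f_r(x)\leq(1+\epsilon)f_t(x)$ (and the reverse for $r<t$) by comparing the integrands and using compactness of $\text{supp}(\rho)$; on the complementary ``low'' region it proves, via fairly delicate integral splitting and dominated convergence, a uniform modulus of continuity for $y\mapsto\Phi(x,y)$ at $y=f_t(x)$, and then verifies the two Hausdorff inclusions by hand. Your approach instead exploits the strict monotonicity of $\Phi(x,\cdot)$ together with joint continuity of $\Phi$ on $\mathbb{R}\times(0,\infty)$ to get $f_t(x)$ jointly continuous by an implicit-function-type sandwich, handles the boundary by a single clean observation ($g(x_0)=1/(t_0-1)<\infty$ forces $\Phi(x_0,c)<g(x_0)$ strictly), and then packages the Hausdorff convergence through the Kuratowski characterisation $\liminf\supseteq\overline{\Gamma_{t_0}}\supseteq\limsup$, relying on the common compact container you exhibit. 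The paper's estimates are more quantitative (one actually sees how $\delta$ depends on $\epsilon$), while your argument is shorter and isolates the single structural reason the result holds, namely that $f_t(x)$ is the unique solution of $\Phi(x,y)=1/(t-1)$ for a function $\Phi$ that is continuous and strictly monotone in $y$. One small point worth making explicit is the passage from the pointwise approximation of each $(x_0,f_{t_0}(x_0))$ by points of $\Gamma_{t_n}$ to the uniform inclusion $\Gamma_{t_0}\subset B_\epsilon(\Gamma_{t_n})$; this follows from the common compact container by a standard subsequence argument, but a sentence saying so would tighten the write-up.
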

\begin{proof} 
We separate our proof into cases.  Fix $c > 0$ and $\epsilon > 0$.
Define $$ V_{t,c}^{+} := \{x \in V_{t}^{+} : f_{t}(x) \geq c \}. $$
We first assume that $t < r$ and claim that there exists $\delta > 0$ such that $r-t < \delta$ implies
\begin{equation}\label{claim1a}
f_{t}(x) < f_{r}(x) \leq (1+\epsilon) f_{t}(x)
\end{equation}
for all $x \in V_{t,c}^{+}$.

The first of these inequalities is obvious.  For the second, observe that
\begin{align*} \int_{\mathbb{R}} & \frac{1 + s^{2}}{(x - s^{2}) + (1 + \epsilon)^{2} f_{t}(x)^{2} }d\rho(s) \\
& \leq \sup_{x \in supp(\rho)} \left( \frac{(x - s^{2}) + f_{t}(x)^{2} }{(x - s^{2}) + (1 + \epsilon)^{2} f_{t}(x)^{2} } \right) \int_{\mathbb{R}} \frac{1 + s^{2}}{(x - s^{2}) +  f_{t}(x)^{2} }d\rho(s)
\\ &=  \sup_{x \in supp(\rho)} \left( \frac{(x - s^{2}) + f_{t}(x)^{2} }{(x - s^{2}) + (1 + \epsilon)^{2} f_{t}(x)^{2} } \right)  \frac{1}{t-1}
 \end{align*}
 Since $supp(\rho)$  is compact, 
the supremum is bounded below $1$ uniformly for those $f_{t}(x) \geq c$.  Thus, for $\delta$ small enough,
$$  \sup_{x \in supp(\rho)} \left( \frac{(x - s^{2}) + f_{t}(x)^{2} }{(x - s^{2}) + (1 + \epsilon)^{2} f_{t}(x)^{2} } \right) < \frac{t-1}{r-1} < 1. $$  This implies
$$  \int_{\mathbb{R}} \frac{1 + s^{2}}{(x - s^{2}) + (1 + \epsilon)^{2} f_{t}(x)^{2} }d\rho(s) < \frac{1}{r-1}$$
Since $f_{r}(x)$ is the infimum among positive real numbers satisfying this inequality, \eqref{claim1a} holds.

Assume that $r < t$ and $\epsilon > 0$.
We claim that there exists $\delta > 0$ such that $t-r < \delta$ implies
\begin{equation}\label{claim1b}
 f_{t}(x) (1-\epsilon) \leq f_{r}(x) < f_{t}(x)
\end{equation}
for all $x \in V_{t,c}^{+}$.
The second inequality is obvious.
We assume, for the sake of contradiction that $r_{n} \uparrow t$  and that $$ c \leq f_{t}(x_{n})  \ ; \ \  f_{r_{n}}(x_{n})< c(1-\epsilon).$$

By compactness, we may assume that $x_{n} \rightarrow x$ and $ c \leq f_{t}(x) $.
Thus,
\begin{align}
\frac{1}{t-1} &= \int_{\mathbb{R}} \frac{1 + s^{2}}{(x-s)^{2} + f_{t}(x)^{2} }d\rho(s) \label{ineqa} \\
&= \int_{\mathbb{R}} \frac{1 + s^{2}}{(x_{n}-s)^{2} + f_{r_{n}}(x_{n})^{2} } \frac{(x_{n}-s)^{2} + f_{r_{n}}(x_{n})^{2}}{(x-s)^{2} + f_{t}(x)^{2} } d\rho(s) \label{ineqb} \\
&\geq \inf_{s\in supp(\rho)} \frac{(x_{n}-s)^{2} + f_{r_{n}}(x_{n})^{2}}{(x-s)^{2} + f_{t}(x)^{2} } \int_{\mathbb{R}} \frac{1 + s^{2}}{(x_{n}-s)^{2} + f_{r_{n}}(x_{n})^{2} }  d\rho(s) \label{ineqc} \\
&= \frac{1}{r_{n} - 1} \inf_{s\in supp(\rho)} \frac{(x_{n}-s)^{2} + f_{r_{n}}(x_{n})^{2}}{(x-s)^{2} + f_{t}(x)^{2} } \label{ineqd}  > \frac{1}{t-1}
\end{align}
where the last inequality holds for $n$ large enough since the ratio is bounded strictly below $1$ by compactness of $supp(\rho)$.  This contradiction proves \eqref{claim1b}.

Note that $f_{t}(x) > 0$ if $\rho$ has an atom at $x$.
Pick $c$ small enough so that $f_{t}(x) > 2c$ at every atom.
Let $N > 0$ so that $supp(\rho) \subset [-N,N]$ and $V_{t}^{+} \subset \subset [-N,N].$

Consider the map $$ h_{x}(y):= \int_{\mathbb{R}} \frac{1 + s^{2} }{(x - s)^{2} + y^{2} } d\rho(s) :  [f_{t}(x) , \infty) \mapsto \left[\frac{1}{t-1},\infty\right).$$
We claim that $h_{x}$ is  continuous  at $f_{t}(x)$ for all $x \in [-N,N]\setminus V_{t,c}^{+}$, and that the rate of convergence is uniform.  Since $f_{t}(x) \leq y$, we have 
\begin{align}
|h_{x}(y) & - h_{x}(f_{t}(x))| = \left|\int_{\mathbb{R}}  \frac{1 + s^{2}}{(x - s)^{2} + f_{t}(x)^{2}} -   \frac{1 + s^{2}}{(x - s)^{2} + y^{2}}d\rho(s) \right| \\& = \int_{\mathbb{R}} \frac{(1 + s^{2})(y^{2} - f_{t}(x)^{2})}{[(x - s)^{2} + f_{t}(x)^{2}][(x - s)^{2} + y^{2}]}d\rho(s) \\
& = \int_{\mathbb{R}} \frac{(1 + s^{2})(y^{2} - f_{t}(x)^{2})}{[(x - s)^{2} + f_{t}(x)^{2}]^{2} + [(x - s)^{2} + f_{t}(x)^{2}][y^{2} - f_{t}(x)^{2}]}d\rho(s) \\
& \leq \int_{[-N , N] \setminus (x - \sqrt{y} , x + \sqrt{y})}\frac{(1 + s^{2})(y^{2} - f_{t}(x)^{2})}{[(x - s)^{2} + f_{t}(x)^{2}]^{2}}d\rho(s) \label{inta} \\
&  + \int_{(x - \sqrt{y} , x + \sqrt{y})} \frac{(1 + s^{2})(y^{2} - f_{t}(x)^{2})}{[(x - s)^{2} + f_{t}(x)^{2}][y^{2} - f_{t}(x)^{2} ]}d\rho(s) \label{intab}.
\end{align}

Pick $\epsilon > 0$.  We rewrite  \eqref{inta} as
\begin{equation}\label{intc}
 \int_{[-N , N] \setminus (x - \sqrt{y} , x + \sqrt{y})}\frac{(1 + s^{2})}{[(x - s)^{2} + f_{t}(x)^{2}]}\frac{(y^{2} - f_{t}(x)^{2})}{[(x - s)^{2} + f_{t}(x)^{2}]}d\rho(s) 
\end{equation}

Assume that $y = f_{t}(x) + \gamma$ where $\gamma \in [ 0 , \epsilon)$.
Let $\epsilon' = 2\gamma f_{t}(x) + \gamma^{2}$ and note that $$ y = \sqrt{f_{t}(x)^{2} + \epsilon'} \ , \ \ \epsilon^{2} < \epsilon' < 2c\epsilon + \epsilon^{2} $$
so that
$$\frac{(y^{2} - f_{t}(x)^{2})}{[(x - s)^{2} + f_{t}(x)^{2}]} \leq \frac{y^{2} - f_{t}(x)^{2}}{y + f_{t}(x)^{2}}   \leq \frac{\epsilon'}{\sqrt{\epsilon'}} \leq \sqrt{2c\epsilon + \epsilon^{2}}$$
Thus, $y - f_{t}(x) < \epsilon$ implies that   \eqref{intc} is bounded by
$$ \sqrt{2\epsilon + \epsilon^{2}} \int_{[-N , N] \setminus (x - \sqrt{y} , x + \sqrt{y})}\frac{(1 + s^{2})}{[(x - s)^{2} + f_{t}(x)^{2}]}d\rho(s) \leq \frac{\sqrt{2\epsilon + \epsilon^{2}} }{t-1}.$$
We conclude that \eqref{inta} is $O(\sqrt{y - f_{t}(x) })$ uniformly in $x$.

We focus on \eqref{intab}.  After cancellation, we have that it is equal to
$$ \int_{(x - \sqrt{y} , x + \sqrt{y})} \frac{(1 + s^{2})}{[(x - s)^{2} + f_{t}(x)^{2}]}d\rho(s). $$
Thus, this is an integrable function taken over arbitrarily small intervals.  We claim that this also converges to $0$ uniformly over $x \in [-N,N]\setminus V_{t,c}^{+}.$

Assume, for the sake of contradiction, that  $x_{n} \in [-N,N]\setminus V_{t,c}^{+}$ and $\delta_{n} > 0$ such that $\delta_{n} \rightarrow 0$ and 
$$ \int_{(x_{n} - \delta_{n} , x_{n} + \delta_{n})} \frac{(1 + s^{2})}{[(x_{n} - s)^{2} + f_{t}(x_{n})^{2}]}d\rho(s) > \gamma > 0 . $$  Since $[-N,N]\setminus V_{t,c}^{+}$ is compact, we may assume that we have a cluster point $x$.

Observe that
\begin{equation}\label{dominator} \frac{(1 + s^{2})}{(x - s)^{2} + [f_{t}(x)/2]^{2}} \in L^{1}(\rho). \end{equation}
When $f_{t}(x) > 0$ this follows from boundedness of the function and when $f_{t}(x) = 0$ this follows from the definition of $f_{t}$.  This will be our dominating function for the dominated convergence theorem.

Fix $\delta > 0$.  We have that
\begin{align*}
\int_{[x-\delta , x + \delta]} \frac{(1 + s^{2})}{(x - s)^{2} + f_{t}(x)^{2}} d\rho(s) & = \lim_{n\uparrow\infty} \int_{[x-\delta , x + \delta]} \frac{(1 + s^{2})}{(x - s)^{2} + f_{t}(x_{n})^{2}} d\rho(s)\\
&= \lim_{n\uparrow\infty} \int_{[x_{n}-\delta , x_{n} + \delta]} \frac{(1 + r^{2})}{(x_{n} - r)^{2} + f_{t}(x_{n})^{2}} d\rho(r)\\
 & + \int_{[x_{n}-\delta , x_{n} + \delta]} \frac{ (2r(x_{n} - x) + (x_{n}-x)^{2})}{(x_{n} - r)^{2} + f_{t}(x_{n})^{2}} d\rho(r) \\
& \geq \lim_{n\uparrow\infty} \int_{[x_{n}-\delta_{n} , x_{n} + \delta_{n}]} \frac{(1 + r^{2})}{(x_{n} - r)^{2} + f_{t}(x_{n})^{2}} d\rho(r) + O(x_{n} - x)\\
&\geq \gamma/2
\end{align*}
by the dominated convergence theorem and change of variables with $r = x_{n} - x + s$.

Since $x$ is not an atom of $\rho$, by the dominated convergence theorem,
$$ 0 = \lim_{\delta \rightarrow 0} \int_{[x-\delta , x + \delta]} \frac{(1 + s^{2})}{(x - s)^{2} + f_{t}(x)^{2}} d\rho(s) \geq \gamma/2 > 0,$$
providing our contradiction.  We conclude that \eqref{intab} converges to $0$ uniformly on $$[-N,N]\setminus V_{t,c}^{+},$$
proving that $h_{x}$ continuous at $f_{t}(x)$ with uniformity in the $x$ variable in the sense that
\begin{equation}\label{uniformity}
\lim_{\zeta \downarrow 0} \sup_{x \in [-N,N]\setminus V_{t,c}}|h_{x}(f_{t}(x) + \zeta ) - h_{x}(f_{t}(x))| = 0  
\end{equation}

To compliete the proof, let $\epsilon > 0$.  We show that there exists a $\delta > 0$ such that $|t-r|<\delta$ implies that
\begin{equation}\label{HD1}
\{ (y,f_{r}(y) \}_{y \in V_{r}^{+}} \subset B_{\epsilon}\left( \{ (x,f_{t}(x) \}_{x \in V_{t}^{+}} \right)
\end{equation}
\begin{equation}\label{HD2}
 \{ (x,f_{t}(x) \}_{x \in V_{t}^{+}}\subset B_{\epsilon}\left( \{ (y,f_{r}(y) \}_{y \in V_{r}^{+}}   \right)
\end{equation}

For \eqref{HD1}, let $x \in V_{t, c}^{+}$ with $c = \epsilon/2$.  Inequalities \eqref{claim1a} and \eqref{claim1b} imply one of the following
\begin{equation}\label{inter1}
0 \leq f_{r}(x) - f_{t}(x) \leq \epsilon' f_{t}(x) \leq \epsilon
\end{equation}
\begin{equation}\label{inter2}
-\epsilon \leq -\epsilon' f_{t}(x) \leq f_{r}(x) - f_{t}(x) \leq 0
\end{equation}
where $ f_{t}(x) < m$ for $x \in [-N,N]$ and $\epsilon' < \epsilon/m$.

We focus on the set $$[-N,N] \setminus V_{t,c}^{+}.$$  For $r < t$, , recall that $V_{r}^{+} \subset V_{t}^{+}$.  For $y \in V_{r}^{+} $ $$0 \leq f_{r}(y) < f_{t}(y) < \epsilon/2$$
so that $$ |(y,f_{t}(y) - (y , f_{r}(y))| < \epsilon/2. $$

For $r > t$,
 \eqref{uniformity} implies that for $\delta$ small, 
$$y \in [-N,N] \setminus V_{t,c}^{+} \ \Rightarrow \ f_{r}(y) < \epsilon/\sqrt{2}.$$
By \eqref{prop1} (for possibly smaller $\delta$), for any $y \in V_{r}^{+}\setminus V_{t,c}^{+}$, there exists an $x \in V_{t}^{+}$ such that $|x-y| < \epsilon/2$.
By continuity of $f_{t}$ (IVT), we may also assume that $f_{t}(x)<\epsilon/2$.  

  Thus,
 \begin{equation}\label{inter3} |(x,f_{t}(x)) - (y,f_{t}(y))|^{2} \leq  |x-y|^{2} + |f_{t}(x)|^{2} + |f_{r}(y)|^{2} \leq \frac{\epsilon^{2}}{4} + \frac{\epsilon^{2}}{4} + \frac{\epsilon^{2}}{2} = \epsilon^{2}.\end{equation}  This proves \eqref{HD1}.

We turn to  \eqref{HD2}. Note that  \eqref{inter1} and \eqref{inter2} hold for $\delta$ small  and $x \in V_{t, c}^{+}$.

For $x \in V_{t}\setminus V_{t, c}^{+}$ and $t < r$, note that $V_{t}^{+} \subset V_{r}^{+}$.   \eqref{uniformity} implies that $f_{r}(x) < \epsilon/\sqrt{2}$ for $\delta$ small, independent of $x$.  For $r < t$, by \eqref{prop1} there exists $y \in V_{r}^{+}$ such that $|x-y|< \epsilon/2$, for $\delta$ small.  By continuity of $f_{r}$, we may assume that $f_{r}(y) < \epsilon/2$.  The inequalities in  \eqref{inter3} hold once again.

This completes our proof.
\end{proof}

\begin{proof}[Proof of Theorem \eqref{MainT}]
Invoking \eqref{lipschitz}, for $r < t$ we have
\begin{align}
| H_{t}(x +& if_{t}(x)) - H_{r}(y + if_{t}(y)) |  \\
&\leq \left| H_{t}(x + if_{t}(x)) - H_{r}(x + if_{t}(x)) \right|  + \left| H_{r}(x + if_{t}(x)) - H_{r}(y + if_{r}(y)) \right| \\
&\leq (t-r)\left[ |x + if_{t}(x)| + |F_{\mu}(x + if_{t}(x))|\right] + 2 |(x,f_{t}(x)) - (y,f_{r}(y))| \\
&= \label{INEQ1} O\left(\min\left\{(t-r) , |(x,f_{t}(x)) - (y, f_{r}(y))|\right\} \right)
\end{align}
and for $t < r$ we have\begin{align}
| H_{t}(&x + if_{t}(x)) - H_{r}(y + if_{t}(y)) |  \\
&\leq (r-t)\left[ |y + if_{r}(y)| + |F_{\mu}(y + if_{r}(y))|\right] + 2 |(x,f_{t}(x)) - (y,f_{r}(y))| \\ 
&= \label{INEQ2} O\left(\min\left\{(r-t) , |(x,f_{t}(x)) - (y, f_{r}(y))|\right\}\right)
\end{align}
since we may bound $f_{r}$ uniformly over $[-N,N]$ and $r \in (t-\delta , t + \delta)$ by Proposition \eqref{ACmain}
and $F_{\mu}$ is continuous on this compact set.

Thus, for any point $H_{t}(x + if_{t}(x))$ with $x \in V_{t}^{+}$, for $|t-r|$ small enough, there exists $y\in V_{r}^{+}$
arising from \eqref{ACmain} such that \eqref{INEQ1} is smaller than $\epsilon$ for $r < t$ and \eqref{INEQ2} is smaller than $\epsilon$ for $t < r$. 
By \eqref{ACT},
$$ supp(\mu_{r}^{ac})= \overline{\psi_{r}(V_{r}^{+})}  = \overline{\{ H_{r}(y + i f_{r}(y) \}_{r \in V_{r}^{+}}} \subset B_{\epsilon}(supp(\mu_{t}^{ac})). $$
The reverse inclusion follows similarly so that $\mu_{t}^{ac}$ has Hausdorff continuous support.  

By \eqref{BelT}, 
\begin{equation}\label{atoms}
\mu_{t}(\{ \alpha \}) = t\mu\left(\left\{ \frac{\alpha}{t} \right\}\right) - (t-1).
\end{equation}
so that the atoms vary continuously.  

The last remaining case are those time $t$ such that
$$ t\mu\left(\left\{ \frac{\alpha}{t} \right\}\right) - (t-1) = 0. $$
That is, at those times $t$ where an atom vanishes.  We claim that, for such a $t$,
\begin{equation}\label{mainatomclaim}
\alpha \in \psi_{t}\left(\overline{V_{t}^{+}}\right) =   \overline{\psi_{t}(V_{t}^{+})} = supp(\mu_{t}^{ac}).
\end{equation}

We assume without loss of generality that $\alpha = 0$ (as translation commutes with the semigroup operation).
Let $$  r\mu\left(\left\{ 0 \right\}\right) - (r-1) = 0 \  \Rightarrow \ \mu\left(\left\{ 0 \right\}\right) = \frac{r-1}{r}$$
for $r > 1$.
Rewriting the Cauchy transform, we have that
$$ G_{\mu}(z) = \frac{r-1}{rz} + \frac{G_{\nu}(z)}{r} $$
where $\nu$ is the probability measure obtained by removing this mass from $\mu$ and normalizing.

We may assume that $$ \nu((-\eta , \eta)) = 0. $$
Indeed, if for every $\epsilon > 0$ there exists a $\delta > 0$ such that $0 \leq t-r<\delta$ implies that  $$\mu_{r}((-\epsilon,0) \cup (0,\epsilon))>0$$   then our theorem is true.  Indeed, since we have finitely many atoms, this implies that $$supp(\mu_{r}^{ac})\cap (-\epsilon,0) \cup (0,\epsilon) \neq \emptyset$$
and for all $t-r < \delta$ we have just proved Hausdorff continuity of $supp(\mu_{r}^{ac})$.

Using the Nevanlinna representation
$$ F_{\mu}(z) = \alpha + z + \int_{\mathbb{R}}\frac{1 + sz}{s-z}d\rho(s) $$
after taking the imaginary part of both sides and some mild algebra, we have that for $z =  iy$,
\begin{align}
\int_{\mathbb{R}} \frac{1 + s^{2}}{s^{2} + y^{2}} d\rho(s) & = \frac{\Im{(F_{\mu}(iy)})}{y} - 1 \label{Nequality1}  \\
&= \Im \left( \frac{i}{iy  G_{\mu}(iy)  } \right) - 1 \\
&= \Im \left( \frac{i}{iy\left( \frac{r-1}{riy} + \frac{G_{\nu}(iy)}{r}  \right)} \right) - 1 \label{Nequality2}
\end{align}
We consider the limit as $y \downarrow 0$. 
By Lemma 7.1 in \cite{BV1}, $$ \lim_{y \downarrow 0} \ iyG_{\nu}(iy) = 0.$$
We conclude that 
$$ \lim_{y \downarrow 0}  \int_{\mathbb{R}} \frac{1 + s^{2}}{s^{2} + y^{2}} d\rho(s) = \lim_{y \downarrow 0}  \Im \left( \frac{i}{iy\left( \frac{r-1}{riy} + \frac{G_{\nu}(iy)}{r}  \right)} \right) - 1  = \frac{r}{r-1} - 1 = \frac{1}{r-1}.$$
 so that $0 \in V_{r +  \epsilon}^{+}$ for all $\epsilon > 0$.

By Proposition \eqref{prop1}, $0 \in \overline{V_{r}^{+}}$ and $f_{r}(0) = 0$.
Thus, $H_{r}(0) \in supp(\mu_{r}^{ac})$.  But,
$$ H_{r}(0) =  0 - (r-1)F_{\mu}(0) = 0. $$
We conclude that  $0 \in supp(\mu_{r}^{ac})$, proving our theorem.

\end{proof}

\section{conclusion}
This significance of this work is another strong regularity result that is a characteristic of free random variables (see also \cite{BV2}).
This also has implications for recent quantum information results found in \cite{benoit} insofar as their entanglement test is now dependent on a continuous function and is therefore not prone to erratic behavior.  
We also note that these free L\`{e}vy processes are the limiting objects of certain processes with range in large matrices \cite{thierry, georges} and it would be interesting to see if some type of coarse continuity phenomenon may be found in the empirical eigenvalue distribution of these objects.

\section*{Acknowledgements}
I would like to thank Hao-Wei Huang for useful discussions.  I am grateful towards the Alexander von Humboldt-Stiftung for their generous support.

%To finish the proof, note that Proposition 3.5 in \cite{haowei} implies that
%$$ |F_{\mu}(z_{1}) - F_{\mu}(z_{2})| \leq \frac{t}{t-1}|z_{1} - z_{2}|  , \ z_{1} , z_{2} \in \overline{\Omega_{t}}.$$
%Thus, fix $\epsilon > 0$.  Pick $\delta > 0$ such that \eqref{mainclaim} holds with radius $\epsilon'$ so that the inequality 
%$$  \delta \epsilon' \left(1 + \min \left\{ \frac{t}{t-1}  , \frac{r}{r-1}\right\} \right) < \epsilon$$
%Then, for $x \in V_{t}^{+}$ there exists a $y \in V_{r}^{+}$ such that 
%\begin{align} |\psi_{t}(x) - \psi_{r}(y)|  &\leq |t-r| \left| [x + if_{t}(x) - y - if_{r}(y)] + F_{\mu}(x + if_{t}(x)) - F_{\mu}( y + if_{r}%(y)) \right| \\
%&\leq|t-r| \left( \epsilon' +   \min \left\{ \frac{t}{t-1}  , \frac{r}{r-1}\right\}\epsilon' \right) < \epsilon
% \end{align}
%We are able to conclude that $$ supp(\mu_{t})^{ac} = \overline{\psi_{t}(V_{t}^{+})} \subset B_{\epsilon}\left( \overline{\psi_{r}%(V_{r}^{+})} \right) = B_{\epsilon}(supp(\mu_{r})^{ac}) $$
%Moreover, for eve%ry $y \in V_{r}^{+}$ where $|t-r|<\delta$, there exists an $x \in V_{t}^{+}$ satisfying the same inequalities, so that we may conclude 
%$$ supp(\mu_{r})^{ac} = \overline{\psi_{r}(V_{r}^{+})} \subset B_{\epsilon}\left( \overline{\psi_{t}(V_{t}^{+})} \right) = %B_{\epsilon}(supp(\mu_{t})^{ac}) $$
%proving Hausdorff continuity.

\bibliographystyle{amsalpha}
\bibliography{Cont}
\end{document}